\newcommand{\Href}[2]{\hyperref[#2]{#1~\ref{#2}}}
\theoremstyle{theorem}
\newtheorem{thm}{Theorem}
\newtheorem{prp}{Proposition}[section]
\newtheorem{conj}{Conjecture}[section]
\theoremstyle{definition}
\newcommand{\st}{:\;}
\newcommand{\Red}{\R^d}
\def\R{{\mathbb R}}%
\newcommand{\ball}[1]{\mathbf{B}^{#1}}
\newcommand{\vertexset}{\mathrm{vert}\ \! }
\providecommand{\parenth}[1]{\left(#1\right)}%
\providecommand{\braces}[1]{\left\{#1\right\}}%
\newcommand{\iprod}[2]{\left\langle#1,#2\right\rangle}%
\newcommand{\conv}{\mathrm{conv}}%
\def\polar{\circ}
\newcommand{\polarset}[1]{{#1}^{\polar}}%
\newcommand{\id}{\mathrm{Id}}
\providecommand{\abs}[1]{\lvert#1\rvert}%
\DeclareMathOperator{\tr}{\mathrm{trace}}
\title{Quantitative Steinitz Theorem: A polynomial bound}
\author{Grigory Ivanov\address{Grigory Ivanov: 
Institute of Science and Technology Austria (IST Austria), 
Klosterneuburg, 3400, Austria}
\email{grimivanov@gmail.com}
\and
M\'arton Nasz\'odi\address{M\'arton Nasz\'odi:
Alfr\'ed R\'enyi Inst. of Mathematics and
Dept. of Geometry, Lor\'and E\"otv\"os University, Budapest}
\email{marton.naszodi@math.elte.hu}
}
\thanks{
M.N. was supported by the J\'anos Bolyai Scholarship of the Hungarian Academy of Sciences as well as the National Research, Development and Innovation Fund (NRDI) grants K119670, K131529 and K147544, and the \'UNKP-22-5 New National Excellence Program of the Ministry for Innovation and Technology from the source of the NRDI.
}
\subjclass[2020]{52A35 (primary), 52A35, 52A27}
\keywords{Helly-type theorem, centroid, John's ellipsoid, Santal\'o point}
\begin{document}
\begin{abstract}
The classical Steinitz theorem states that if the origin belongs to the interior of the convex hull of a set $S \subset \mathbb{R}^d$, then there are at most $2d$ points of $S$ whose convex hull contains the origin in the interior. 
B\'ar\'any, Katchalski, and Pach proved the following quantitative version of Steinitz's theorem.
Let $Q$ be a convex polytope  in $\mathbb{R}^d$ containing the standard Euclidean unit ball $\mathbf{B}^d$.
Then there exist at most $2d$ vertices of $Q$ whose convex hull
$Q^\prime$ satisfies
\[
r \mathbf{B}^d \subset Q^\prime 
\]  
with $r\geq d^{-2d}$. They conjectured that $r\geq c d^{-1/2}$ holds with a universal constant $c>0$. We prove $r \geq \frac{1}{5d^2}$, the first polynomial lower bound on $r$.
Furthermore, we show that $r$ is not be greater than $\frac{2}{\sqrt{d}}$.
\end{abstract}
\maketitle

\section{Introduction}
The goal of this paper is to establish a quantitative version of the following classical result of  
E. Steinitz \cite{steinitz1913bedingt}.
\begin{prp}[Steinitz theorem]
Let the origin belong to the interior of the convex hull of a set $S \subset \Red.$ 
Then there are at most $2d$ points of $S$ whose convex hull contains the origin in the interior. 
\end{prp}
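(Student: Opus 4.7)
The plan is to first reduce to a finite minimal witness and then bound its size using separating hyperplanes and induction on the dimension. Since $0 \in \mathrm{int}(\conv(S))$, some ball $\epsilon \ball{d}$ lies inside $\conv(S)$, so applying Carath\'eodory's theorem to the $2d$ test points $\pm \epsilon e_i$ yields a finite subset $S_0 \subseteq S$ still satisfying $0 \in \mathrm{int}(\conv(S_0))$. Within $S_0$ I pick $T$ inclusion-minimal with respect to the property $0 \in \mathrm{int}(\conv(T))$; the task reduces to showing $|T| \leq 2d$.

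The key structural consequence of minimality is that for each $v \in T$ there exists a hyperplane $H_v$ through $0$ with unit normal $u_v$ satisfying $\iprod{u_v}{w} \geq 0$ for every $w \in T \setminus \{v\}$. Taking a strictly positive convex representation $0 = \sum_{w \in T} \lambda_w w$ and pairing it with $u_v$ forces $\iprod{u_v}{v} < 0$: otherwise every term in $\sum_w \lambda_w \iprod{u_v}{w} = 0$ would be nonnegative with strictly positive weights $\lambda_w$, forcing $T \subseteq H_v$ and contradicting $\mathrm{aff}(T) = \R^d$. Enumerating $T = \{v_1, \ldots, v_n\}$ and setting $u_i := u_{v_i}$, the $n \times n$ matrix $A_{ij} = \iprod{u_i}{v_j}$ then has strictly negative diagonal, nonnegative off-diagonal entries, and rank at most $d$.

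The main obstacle is converting this sign-pattern data into the sharp bound $n \leq 2d$. I would proceed by induction on $d$, with the base $d = 1$ immediate since $0 \in \mathrm{int}(\conv(T)) \subset \R$ needs one point on each side of the origin. For the step, fix $v_1 \in T$ and project $T \setminus \{v_1\}$ to $\R^{d-1} \cong \R^d / \mathrm{span}(v_1)$; the projected set still has $0$ in the relative interior of its convex hull, because for any $u \in v_1^\perp$ the interior condition on $0$ produces a $v \in T$ with $\iprod{u}{v} > 0$ and this pairing is preserved by projection. The inductive hypothesis supplies at most $2(d-1)$ projected witnesses $v_{i_1}, \ldots, v_{i_k}$, whose positive combination in $\R^d$ equals $c\, v_1$ for some $c \in \R$. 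If $c \leq 0$ one adjoins $v_1$ to obtain a strictly positive convex combination of at most $2d - 1$ points summing to $0$; if $c > 0$ one must additionally adjoin a point of $T$ lying strictly opposite to $v_1$ along $\mathrm{span}(v_1)$, whose existence follows from applying the separating-hyperplane analysis above to $v_1$ itself. The delicate part, and the crux of the whole argument, is to guarantee that the resulting set of at most $2d$ points really contains $0$ in its $d$-dimensional interior rather than merely on a face; handling this cleanly likely requires strengthening the inductive statement to carry along the full sign-matrix structure described above, so that the added points always introduce a genuinely new dimension and break any residual face through the origin.
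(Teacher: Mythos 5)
The paper does not give a proof of this proposition: it is the classical Steinitz theorem, stated with a citation to Steinitz's 1913 paper, so there is no in-paper argument to compare against. Evaluating your attempt on its own merits: the reduction to a finite minimal witness $T$, the existence of separating normals $u_v$ with $\iprod{u_v}{w}\geq 0$ for $w\in T\setminus\{v\}$ and $\iprod{u_v}{v}<0$, and the claim that $0$ remains in the relative interior of the projected hull are all correct, and the sign matrix $A_{ij}=\iprod{u_i}{v_j}$ with negative diagonal, nonnegative off-diagonal and rank at most $d$ is the right object to extract. But the inductive step does not close, and the gap you flag at the end is a genuine one, not a cosmetic issue.

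Concretely: in the case $c=0$ the relation $0=\sum_j\mu_j v_{i_j}$ already holds with zero weight on $v_1$, so adjoining $v_1$ cannot produce a \emph{strictly} positive representation of $0$; worse, pairing with $u_{v_1}$ forces every $v_{i_j}$ into the hyperplane $u_{v_1}^\perp$, so $0$ lies on a facet of $\conv(\{v_{i_j}\}\cup\{v_1\})$ and one must add a second point $w$ with $\iprod{u_{v_1}}{w}>0$ (such a $w$ does exist, since $0=\sum_{w\in T}\lambda_w\iprod{u_{v_1}}{w}$ with $\lambda_w>0$ and the $v_1$-term negative), then verify interiority. In the case $c>0$ the phrase ``a point of $T$ lying strictly opposite to $v_1$ along $\mathrm{span}(v_1)$'' is not a well-defined notion, and its existence does not follow from the separating-hyperplane analysis: that analysis produces $u_{v_1}$, which need not be aligned with $v_1$, and no element of $T$ is guaranteed to have negative projection onto $v_1$. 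Finally, even in the cleanest case $c<0$ you do not argue that the resulting set spans $\Red$ and hence that $0$ is in its full-dimensional interior (this can in fact be shown, but it needs to be shown). As it stands the proposal is an incomplete proof; strengthening the inductive hypothesis, as you suggest, or abandoning the projection in favor of a rank/sign argument on the matrix $A$ directly, would be the natural ways to repair it.
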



The first quantitative version of this result was obtained in \cite{barany1982quantitative}, where the following statement was proven.
\begin{prp}[Quantitative Steinitz theorem]\label{prp:qst_from_BKP}
There exists a constant $r = r(d) > 0$ such that for any subset $Q$ of $\Red$ whose convex hull contains the Euclidean unit ball $\ball{d},$ there exists a subset $F$ of $Q$ of size at most $2d$
whose convex hull contains the ball $r \ball{d}.$ 
\end{prp}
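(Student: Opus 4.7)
My plan is to prove the proposition by induction on the dimension $d$, obtaining some explicit (but not necessarily optimal) constant $r(d) > 0$.

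The base case $d=1$ is immediate: any set $Q \subseteq \R$ with $\conv Q \supseteq [-1,1]$ must contain some $a \leq -1$ and some $b \geq 1$, and $F := \{a,b\}$ satisfies $\conv F \supseteq [-1,1]$, so $r(1) = 1$.

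For the inductive step, assume the result in dimension $d-1$ with constant $r(d-1) > 0$, and let $Q \subseteq \R^d$ satisfy $\conv Q \supseteq \ball{d}$. Because $\ball{d} \subseteq \conv Q$, there exist $v_+, v_- \in Q$ with $\iprod{v_+}{e_d} \geq 1$ and $\iprod{v_-}{e_d} \leq -1$. Let $\pi \colon \R^d \to e_d^{\perp} \cong \R^{d-1}$ be orthogonal projection; since $\pi(\ball{d}) = \ball{d-1}$, the set $\pi(Q)$ satisfies $\conv \pi(Q) \supseteq \ball{d-1}$. The inductive hypothesis applied to $\pi(Q)$ furnishes $F' \subseteq \pi(Q)$ with $|F'| \leq 2(d-1)$ and $\conv F' \supseteq r(d-1)\ball{d-1}$. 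For each $p \in F'$ pick a preimage $\tilde p \in Q$ with $\pi(\tilde p) = p$, and set
\[
F := \{\tilde p : p \in F'\} \cup \{v_+, v_-\}, \qquad |F| \leq 2d.
\]
It then remains to show that $\conv F$ contains a ball of positive radius $r(d)$ centered at the origin.

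The main obstacle lies in this last quantitative step. The projection of $\conv F$ onto $e_d^{\perp}$ contains $r(d-1)\ball{d-1}$, and the segment $[v_-,v_+] \subseteq \conv F$ crosses $e_d^{\perp}$ transversally, so $0$ is in the interior of $\conv F$; but the preimages $\tilde p$ may have $e_d$-coordinates of arbitrarily large modulus, and this vertical spread can tilt $\conv F$ and collapse its inradius near the origin. To make the argument quantitative one must work with an affinely invariant ellipsoidal generalization of the statement (if an ellipsoid $E$ lies in $\conv Q$ then some $F \subseteq Q$ of size $\leq 2d$ satisfies $\conv F \supseteq r(d)\,E$), apply a linear transformation sending $\conv Q$ to a body in John position so that the vertex norms are bounded by $d$, and then carefully combine the lifted preimages with $v_{\pm}$ — using $v_{+}$ and $v_{-}$ to cancel the vertical coordinates while retaining a definite horizontal reach around $0$. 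The resulting recursion has the shape $r(d) \geq c\cdot r(d-1)/\mathrm{poly}(d)$, yielding $r(d) \geq d^{-O(d)}$ of the same order as the original B\'ar\'any--Katchalski--Pach estimate. Circumventing this exponential loss to obtain the polynomial bound $1/(5d^2)$ announced in the abstract evidently requires tools (centroid-type points, Santal\'o-type duality) beyond the naive induction, and is the main contribution of the present paper.
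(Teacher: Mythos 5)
Your base case is fine, but the inductive step contains a genuine gap, and the explicit claim that ``$0$ is in the interior of $\conv F$'' is false as stated. Knowing $\pi(\conv F) \supseteq r(d-1)\,\ball{d-1}$ only controls the \emph{shadow} of $\conv F$ on $e_d^{\perp}$, not the slice $\conv F \cap e_d^{\perp}$; and the fact that $[v_-,v_+]$ crosses $e_d^{\perp}$ transversally only produces \emph{some} point of $\conv F$ at height $0$, not the origin. Concretely, take $d=2$, suppose the base case returns $F' = \{1,-1\}$, the chosen preimages are $\tilde p_1 = (1,M)$, $\tilde p_2 = (-1,M)$ with $M$ large, and $v_{\pm} = (10,\pm 1)$. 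A convex combination of these four points with second coordinate $0$ must put total weight at least $M/(M+1)$ on $\{v_+,v_-\}$ and at most $1/(M+1)$ on $\{\tilde p_1,\tilde p_2\}$, so its first coordinate is at least $10 - 11/(M+1)$; hence $0 \notin \conv F$ at all, not merely at small inradius. Since neither the inductive hypothesis nor the lifting step gives you any control over the heights of the preimages or over the horizontal positions of $v_{\pm}$, the induction cannot be closed as written; the John-position/ellipsoidal repair you gesture at is the real content of such a proof, and it is not carried out, so no positive $r(d)$ --- however small --- has actually been produced.

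It is also worth noting that this inductive route, even if completed, is very different from the paper's. The paper obtains Proposition~\ref{prp:qst_from_BKP} as an immediate consequence of the much stronger Theorem~\ref{thm:QST_monochromatic}, proved by a double polarity argument: set $K = \polarset{Q}$, translate by a centroid-type point $c$ so that $K - c \subseteq -d(K - c)$ (Proposition~\ref{prp:d_centers}), polarize again, apply a sparse-approximation theorem for polytopes (Proposition~\ref{prp:ambrus_sparsification}), and then undo the two polarities. That avoids any dimension induction and yields a polynomial $r(d)$, whereas the induction you sketch would --- as you yourself anticipate --- at best reproduce the $d^{-O(d)}$ bound of B\'ar\'any--Katchalski--Pach.
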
 
It was also shown that $r(d) > d^{-2d}.$

With the exception of the planar case $d=2$ \cite{kirkpatrick1992quantitative, brass1997quantitative,
barany1994exact}, no significant improvement on $r(d)$ has been obtained (see also \cite{de2017quantitative}).  

Now we state the main result of this paper in which we obtain a polynomial bound on $r(d).$

\begin{thm}[Q.S.T. with polynomial bound]\label{thm:QST_monochromatic}
Let $Q$ be a subset of $\Red$  whose convex hull contains the Euclidean unit ball $\ball{d}.$
Then there exist at most $2d$ points of $Q$ whose convex hull
$Q^\prime$ satisfies
\[
\frac{1}{6d^2} \ball{d} \subset Q^\prime.
\]  
\end{thm}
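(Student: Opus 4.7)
\emph{Reduction and reformulation.} By a standard compactness/Carath\'eodory argument we may assume $Q$ is finite and $P := \conv(Q)$ is a polytope containing $\ball{d}$, with $\vertexset(P) \subseteq Q$. The containment $r\ball{d} \subset \conv(F)$ is equivalent to the support-function condition $\max_{x \in F}\iprod{x}{u} \geq r$ for every unit $u \in S^{d-1}$. Hence, setting $r = 1/(6d^2)$, the task reduces to selecting $F \subseteq \vertexset(P)$ with $|F|\leq 2d$ such that the $2d$ spherical caps $C_x := \{u \in S^{d-1} : \iprod{x}{u} \geq r\}$, $x \in F$, cover $S^{d-1}$.

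\emph{Construction via an affine frame.} Motivated by the keywords listed in the abstract (John's ellipsoid, centroid, Santal\'o point), my plan is to choose a distinguished interior point $c$ of $P$---a Santal\'o-type point of $P$, or equivalently the centroid of $(P-c)^{\polar}$---together with an orthonormal basis $e_1, \ldots, e_d$ aligned with the principal axes of the John ellipsoid of $P$ centered at $c$. For each pair $(i, \sigma) \in \{1, \ldots, d\}\times\{\pm 1\}$ I would select $x_{i, \sigma} \in \vertexset(P)$ extremising $\sigma\iprod{x-c}{e_i}$, producing the desired $2d$ vertices. To check the inradius bound, expand a unit $u = \sum \alpha_i e_i$; there exists $i$ with $|\alpha_i| \geq 1/\sqrt d$, and one sets $\sigma = \mathrm{sign}(\alpha_i)$. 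Decomposing $\iprod{x_{i,\sigma}}{u}$ into its $e_i$-component (of size $\geq h_P(\sigma e_i) \geq 1$, since $\ball{d} \subset P$) and its perpendicular part, and using an aspect-ratio bound of order $d$ from the John-type inclusion $P \subset c + d(E_J - c)$, one aggregates two losses of order $1/\sqrt{d}$ (direction splitting and John rounding) to get the $1/d^2$ scaling, with the constant $6$ absorbing the loss factors.

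\emph{Main obstacle.} The crucial difficulty is controlling the perpendicular contribution. A priori, the extremal vertex $x_{i,\sigma}$ can have components perpendicular to $e_i$ of order $d$, fully cancelling its $e_i$-component in the inner product with an adversarial $u$; without care this destroys the estimate. The Santal\'o/centroid machinery enters precisely to prevent this: choosing $c$ so that $(P - c)^{\polar}$ has centroid at the origin produces an integrated cancellation of the perpendicular excursions, forcing the perpendicular components to stay within an $O(d)$ factor of $h_P(\sigma e_i)$ in the relevant averaged sense. Making this rigorous---and, in particular, dealing with the asymmetry of $P$, where the L\"owner--John inclusion costs a full factor of $d$ rather than $\sqrt{d}$---is the main technical step, and is where I expect the bulk of the work to lie. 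A successful implementation would also clarify the gap between our $1/d^2$ lower bound and the $2/\sqrt d$ upper bound mentioned in the abstract.
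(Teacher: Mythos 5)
Your approach is genuinely different from the paper's, and, as you candidly note, it has a gap that I do not believe your proposed tools would close.

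The paper never touches John's ellipsoid or any coordinate-frame construction. Instead it works entirely through polar duality: set $K=Q^\polar\subset\ball d$, pick a ``deep'' point $c$ with $K-c\subset -d(K-c)$ (Proposition~\ref{prp:d_centers}; the centroid works), pass to $L=(K-c)^\polar$ so that $L\subset -dL$, and then invoke the sparse approximation lemma of Almendra--Hern\'andez, Ambrus and Kendall (Proposition~\ref{prp:ambrus_sparsification}) to extract $m\le 2d$ vertices of $L$ with $L\subset -(d+2)d\cdot\conv\{w_i\}$. Taking polars once more and undoing the translation by $c$ yields $\bigcap H_{v_i}\subset(2(d+2)d+1)\ball d\subset 5d^2\ball d$. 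The $2d$ vertices that emerge are \emph{not} coordinate-extremal; they are the $d$ vertices of a maximal-volume simplex (with one vertex at the origin) inside $L$, plus at most $d$ further vertices coming from a Carath\'eodory representation of one additional boundary point. This selection is precisely what makes the estimate go through, and it is invisible from the John-frame viewpoint.

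The obstacle you flag is real and, in my view, fatal to the plan as written. Even in L\"owner--John position you only get $P\subset c+d(E_J-c)$, so the extremal vertex $x_{i,\sigma}$ can have perpendicular components of size $\Theta(d)$ while its $e_i$-component is only guaranteed to be $\ge 1$. Choosing $i$ with $|\alpha_i|\ge 1/\sqrt d$ gives you $+1/\sqrt d$ from the aligned term, but the perpendicular term can contribute as little as $-\Theta(d)$ if the adversary points $u$'s orthogonal part against $x_{i,\sigma}$'s. This is not a loss of ``order $1/\sqrt d$'' that a constant $6$ can absorb; it wipes out the estimate entirely. The Santal\'o/centroid normalization you invoke gives an \emph{averaged} cancellation of the polar's centroid, which is not the pointwise control on a specific extremal vertex that your inequality needs. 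And you cannot escape by switching to a different index $i$: the adversary can arrange $u$ so that every coordinate with $|\alpha_i|\ge 1/\sqrt d$ suffers the same cancellation. What is missing is a genuinely different vertex-selection rule, and the paper supplies exactly that (maximal-volume simplex $+$ Carath\'eodory), reached via duality rather than via a frame. If you want to rescue a frame-based proof, you would at minimum need to couple the choice of directions $e_i$ to the choice of vertices so that the perpendicular excursions are bounded uniformly, not in mean --- which is essentially a reformulation of the sparse approximation lemma rather than a shortcut around it.
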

We conjecture the following.
\begin{conj}\label{conj:QST_monochromatic}
There is a constant $c > 0$ such that  in any  subset $Q$ of $\Red$  whose convex hull contains the Euclidean unit ball $\ball{d},$  there are at most $2d$ points whose convex hull
$Q^\prime$ satisfies
\[
\frac{c}{\sqrt{d}} \ball{d} \subset Q^\prime.
\]  
\end{conj}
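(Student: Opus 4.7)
The plan is to combine the asymmetric John ellipsoid of $K := \conv Q$ with a sparsification-and-pairing argument to extract $2d$ well-placed points of $Q$. The conjecture is affine invariant: it is equivalent to the assertion that for any ellipsoid $E$ with $\conv Q\supset E$, there are $2d$ points of $Q$ whose convex hull contains $(c/\sqrt d)E$. Choosing $E$ to be the maximum-volume ellipsoid inscribed in $K$ and applying the linear map that sends $E$ to $\ball{d}$, we may assume $\ball{d}\subset K\subset d\cdot\ball{d}$ with $\ball{d}$ the John ellipsoid of $K$. The asymmetric John theorem then furnishes contact points $u_1,\ldots,u_m\in\partial K\cap\partial\ball{d}$ and positive weights $\lambda_i$ with $\sum_i\lambda_i u_i u_i^{\top}=\id$, $\sum_i\lambda_i u_i=0$, and $\sum_i\lambda_i=d$. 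In particular, for every $y\in\partial\ball{d}$, $\sum_i\lambda_i\iprod{u_i}{y}^2=1$ forces $\max_i|\iprod{u_i}{y}|\ge 1/\sqrt d$.

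In the symmetric case this immediately yields $\conv\{\pm u_i\}\supset (1/\sqrt d)\ball{d}$, and a Batson--Spielman--Srivastava-type sparsification reduces the required point count to $O(d)$. The crux of the conjecture lies in the asymmetric case: the \emph{one-sided} estimate $\max_i\iprod{u_i}{y}\ge c/\sqrt d$ may fail. Indeed, for a regular simplex inscribed in $\ball{d}$ (whose $d+1$ facet normals form an asymmetric John decomposition) one only obtains $\conv\{u_i\}\supset (1/d)\ball{d}$; this $1/d$ gap is precisely the source of the $\Omega(1/d^2)$ bound proved in Theorem~\ref{thm:QST_monochromatic}.

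To bridge the gap, I would pair each contact direction with an opposing vertex: select $d$ directions $u_1,\ldots,u_d$ satisfying a sparsified John-like identity $\sum_{i=1}^d \mu_i u_iu_i^\top\succeq c\,\id$, and for each $i$ extract two vertices $v_i^+,v_i^-\in Q$ with $\iprod{v_i^+}{u_i}\ge 1$ and $\iprod{v_i^-}{u_i}\le -1$ (the latter exists since $\ball{d}\subset K$ gives $h_K(-u_i)\ge 1$). This produces $2d$ vertices intended to behave as an asymmetric cross-polytope. The principal obstacle is controlling the \emph{transverse} components of $v_i^\pm$: each such vertex lies in $K\subset d\cdot\ball{d}$, so its projection orthogonal to $u_i$ may have norm up to $d$, potentially overwhelming the favourable $\pm 1$ component along $u_i$. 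Overcoming this likely requires either a careful selection of $v_i^\pm$ that minimises transverse mass (e.g.\ the vertex of smallest Euclidean norm satisfying $\iprod{\cdot}{\pm u_i}\ge 1$), or a genuinely new decomposition of the identity built directly from vertices of $K$; a Santal\'o-point variant of John's theorem might help dispense with the loose bound $K\subset d\cdot\ball{d}$. Achieving exactly $2d$ vertices with a universal constant seems to require tools beyond the direct combination of asymmetric John and BSS sparsification.
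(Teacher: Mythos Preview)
The statement you address is labelled in the paper as a \emph{conjecture}; the authors do not prove it and offer no proof sketch. What they do establish is the weaker bound $r(d)\ge 1/(6d^2)$ of \Href{Theorem}{thm:QST_monochromatic}, and their route is entirely different from yours: they dualise to $K=\polarset{Q}\subset\ball{d}$, translate so that $K-c\subset -d(K-c)$ via \Href{Proposition}{prp:d_centers}, apply \Href{Proposition}{prp:ambrus_sparsification} to $L=\polarset{(K-c)}$ to select at most $2d$ vertices, and dualise back. No John decomposition, contact points, or BSS-type sparsification enter their argument; your remark that ``this $1/d$ gap is precisely the source of the $\Omega(1/d^2)$ bound'' does not reflect the paper's actual mechanism.

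Your outline is, as you yourself concede in its last sentence, not a proof: controlling the transverse components of the $v_i^{\pm}$ is exactly the crux, and it is left open. But there is an earlier and more basic gap. The step ``we may assume $\ball{d}$ is the John ellipsoid of $K$'' is not a valid reduction. The affine reformulation you quote is a \emph{for all $E$} statement; verifying it for one particular inscribed ellipsoid (the John one) does not verify it for the ellipsoid you actually care about, namely the given $\ball{d}$. Concretely, even if you proved that every $K$ admits $2d$ vertices whose convex hull contains $(c/\sqrt d)\,E_{\mathrm{John}}(K)$, the conjecture would not follow, because the John ellipsoid of a body containing $\ball{d}$ need not contain any fixed multiple of $\ball{d}$: for $K=[-1,N]\times[-1,1]\subset\R^2$ the John ellipse is centred near $(N/2,0)$, and the largest disc about the origin it contains has radius $2\sqrt{N}/(N+1)\to 0$ as $N\to\infty$. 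So your opening normalisation already discards the hypothesis that the \emph{prescribed} ball lies in $K$, and nothing later recovers it.
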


We  provide  an upper bound on $r(d).$ 

\begin{thm}\label{thm:qst_upper_bound}
Let $u_1, \dots, u_n$ be unit vectors in $\Red.$
Then their absolute convex hull, that is the convex hull of $\pm u_1, \dots, \pm u_n,$  does not contain the ball
$\parenth{\frac{\sqrt{n}}{d} + \varepsilon} \ball{d}$ for any positive $\varepsilon.$
\end{thm}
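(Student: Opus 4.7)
By polar duality, since $K := \conv\{\pm u_1,\dots,\pm u_n\}$ is centrally symmetric, $r\ball{d} \subseteq K$ is equivalent to $K^\polar \subseteq (1/r)\ball{d}$, where
\[
K^\polar = \{v\in\Red : |\iprod{v}{u_i}|\leq 1 \text{ for all } i\}.
\]
The theorem therefore reduces to exhibiting, for every $\varepsilon>0$, a point $v \in K^\polar$ with $|v| > d/(\sqrt n + \varepsilon d)$; passing to the limit, it suffices to produce $v \in K^\polar$ with $|v| \geq d/\sqrt n$, equivalently a unit vector $v$ with $\max_i|\iprod{v}{u_i}| \leq \sqrt n/d$.

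The main algebraic input is the trace identity $\tr M = n$ for $M := \sum_{i=1}^n u_i u_i^T$, which holds because each $u_i$ is a unit vector. For any $d$-subset $I\subseteq[n]$ with $U_I := [u_i]_{i\in I}$ nonsingular, this restricts to $\tr(U_I U_I^T) = d$, and the AM--HM inequality applied to the eigenvalues of the $d\times d$ positive definite matrix $U_I U_I^T$ yields
\[
\tr\bigl((U_I U_I^T)^{-1}\bigr) \;\geq\; \frac{d^2}{\tr(U_I U_I^T)} \;=\; d.
\]
Every vertex of $K^\polar$ has the form $v_{I,\epsilon} := U_I^{-T}\epsilon$ with $\epsilon \in \{\pm1\}^d$, provided the non-tight constraints $|\iprod{v_{I,\epsilon}}{u_j}|\leq 1$ for $j\notin I$ are satisfied, and its squared norm equals $\epsilon^T(U_I U_I^T)^{-1}\epsilon$. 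Averaging over the uniform measure on $\{\pm 1\}^d$ and using $\mathbb{E}_\epsilon (\epsilon^T A \epsilon) = \tr A$, some sign pattern $\epsilon$ achieves $|v_{I,\epsilon}|^2 \geq d$.

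The principal obstacle is promoting this candidate to an actual element of $K^\polar$ of norm at least $d/\sqrt n$, since admissibility for the indices $j\notin I$ is not automatic. A natural remedy is rescaling: set $c := \max_{j\in[n]}|\iprod{v_{I,\epsilon}}{u_j}|\geq 1$, so that $\tilde v := v_{I,\epsilon}/c \in K^\polar$ with $|\tilde v| = |v_{I,\epsilon}|/c$. To bound $c$ one can combine $c^2 \leq v_{I,\epsilon}^T M v_{I,\epsilon}$ with $\tr M = n$, or choose $I$ to maximize $\det(U_I U_I^T)$, in which case each $u_j$ with $j\notin I$ decomposes as $u_j = U_I c_j$ with $\|c_j\|_\infty \leq 1$ by the classical maximum-volume basis argument, giving $|\iprod{v_{I,\epsilon}}{u_j}| = |\epsilon^T c_j|$ which can be controlled. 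I expect the hardest step to be combining the trace-based lower bound on $|v_{I,\epsilon}|$ with the rescaling analysis to recover the sharp $\sqrt n/d$; indeed, the naive smallest-eigenvector argument (take $v$ the least eigenvector of $M$) only yields $\max_i|\iprod{v}{u_i}| \leq \sqrt{\lambda_{\min}(M)}\leq \sqrt{n/d}$, which is weaker by a factor of $\sqrt d$, so exploiting the vertex structure of $K^\polar$ together with the extremality of the selected $I$ is essential.
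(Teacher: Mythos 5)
Your polar-duality reduction and the trace identity $\tr M = n$ match the paper's starting point, but the argument does not close, and you yourself flag where it stalls. The vertex $v_{I,\epsilon} = U_I^{-T}\epsilon$ is only guaranteed to satisfy the $d$ constraints indexed by $I$, and the rescaling factor $c = \max_j|\iprod{v_{I,\epsilon}}{u_j}|$ cannot be bounded sharply enough by either of your two suggestions. With the maximum-volume choice of $I$, Cramer's rule gives $\|c_j\|_\infty\leq 1$, hence $|\epsilon^T c_j|\leq d$, so $c\leq d$; combined with $|v_{I,\epsilon}|^2\geq \tr\bigl((U_I U_I^T)^{-1}\bigr)\geq d$ this yields only $|\tilde v| \geq \sqrt d/d = 1/\sqrt d$, which is weaker than the target $d/\sqrt n$ unless $n\geq d^3$ (and in the Steinitz-relevant range $n\leq 2d$ it loses a full factor of order $d$). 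The alternative bound $c^2\leq v_{I,\epsilon}^T M v_{I,\epsilon}$ is also unusable without control of $\lambda_{\max}(M)$, which can be as large as $n$. So this is a genuine gap, not merely an unpolished step.

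The paper sidesteps the combinatorial selection of a $d$-subset entirely. It normalizes $A = \sum_i u_i\otimes u_i$ so that $v_i = A^{-1/2}u_i$ satisfy $\sum_i v_i\otimes v_i = \id$, then invokes the Ball--Prodromou theorem (Proposition~\ref{prp:trace_bound_tight_frame}) with $T = A^{-1}$. That theorem produces a point $p$ lying in the intersection of \emph{all} $n$ strips $\{|\iprod{x}{v_i}|\leq 1\}$ and satisfying $\iprod{p}{A^{-1}p}\geq \tr A^{-1}$. Pulling back by $q = A^{-1/2}p$ gives $q\in K^\polar$ with $|q|^2 = \iprod{p}{A^{-1}p}\geq \tr A^{-1}\geq d^2/\tr A = d^2/n$ by AM--HM (equivalently Cauchy--Schwarz) on the eigenvalues of $A$. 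The simultaneous control of all $n$ constraints is exactly the ingredient your vertex-plus-rescaling sketch lacks; Ball--Prodromou is a nontrivial black box and is doing the heavy lifting that local trace information over a single $d$-subset cannot.
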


It follows that if $u_1, \dots, u_m$ form a sufficiently dense subset of the unit sphere (with a large $m$), then their convex hull is almost the unit ball, while for any $n$ of them with  $n \leq 2d$, we have that their convex hull does not contain the ball $\frac{2}{\sqrt{d}}$, which shows that the order of magnitude of $r(d)$ in \Href{Conjecture}{conj:QST_monochromatic} is sharp if the conjecture holds.

We mention the following conjecture which is closely related to \Href{Theorem}{thm:qst_upper_bound}. It can be found in a different formulation in \cite[p.194]{boroczky2004finite}.
\begin{conj}
\label{conj:2d_cups}
Let $\{u_1, \dots, u_{2d}\}$ be unit vectors  in $\Red$.
Then there is a point in the set  
\[
\bigcap\limits_{i=1}^{2d}\{x \in \Red \st \iprod{u_i}{x} \leq 1\}
\]
 with norm $\sqrt{d}$.
\end{conj}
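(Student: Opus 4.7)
My plan is to rewrite the conjecture through polar duality and then tackle it as a spherical covering problem. Set $P = \conv\{u_1, \dots, u_{2d}\}$; the set in the statement is exactly the polar body $\polarset{P}$, and the conjecture asserts that $\polarset{P}$ contains a point of norm at least $\sqrt{d}$. If $0 \notin \mathrm{int}\,P$, then $P$ lies in a closed half-space through the origin, so $\polarset{P}$ is unbounded and the conclusion is immediate; I therefore assume $0 \in \mathrm{int}\,P$, so that $\polarset{P}$ is a bounded polytope.

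Now the vertices of $\polarset{P}$ are the points $x_I$ indexed by those $d$-subsets $I \subset \{1,\dots,2d\}$ for which $\{u_i\}_{i \in I}$ is a basis of $\Red$ and the unique solution $x_I$ of $\iprod{u_i}{x_I} = 1$ for $i \in I$ additionally satisfies $\iprod{u_j}{x_I} \leq 1$ for every $j \notin I$. With $V_I$ the $d \times d$ matrix whose rows are $u_i^\top$, $i \in I$, one has $\|x_I\|^2 = \mathbf{1}^\top (V_I V_I^\top)^{-1} \mathbf{1}$, and the goal reduces to exhibiting a feasible $I$ with $\|x_I\|^2 \geq d$. By homogeneity this is equivalent to the cleaner statement: there exists a unit vector $y \in \Red$ with $\iprod{u_i}{y} \leq 1/\sqrt{d}$ for every $i$, since $\sqrt{d}\,y$ is then a point of the required norm. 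Thus the problem becomes showing that the $2d$ open spherical caps
\[
C_i = \braces{y \in S^{d-1} \st \iprod{u_i}{y} > 1/\sqrt{d}}, \quad i = 1, \dots, 2d,
\]
do not cover $S^{d-1}$.

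A first reduction is immediate: $-u_i \in C_j$ iff $\iprod{u_j}{u_i} < -1/\sqrt{d}$, so whenever some $u_i$ has no $u_j$ ($j \neq i$) in its opposite cap, the vector $y = -u_i$ is already feasible. The hard case is therefore when every $u_i$ has such a near-antipode among the other $u_j$'s, forcing the collection into a nearly antipodal configuration. This is the regime of the extremal cross-polytope example $\{\pm e_1, \dots, \pm e_d\}$, where $\polarset{P} = [-1,1]^d$ and the vertex $(1, \dots, 1)$ exactly attains the bound $\sqrt{d}$.

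The main obstacle is that in this tight regime the crude union bound on the spherical measures of the $C_i$ fails by a factor exponential in $d$, so any proof must exploit nontrivial correlations among the caps. My preferred attack is topological: use an approximate antipodal pairing of the $u_i$'s to build a continuous odd map $f : S^{d-1} \to \Red$ whose zeros correspond to feasible $y$, and invoke a Borsuk--Ulam argument to force existence. A complementary analytic route is to average $\|x_I\|^2 = \mathbf{1}^\top (V_I V_I^\top)^{-1} \mathbf{1}$ over feasible bases $I$ via a Cauchy--Binet identity and argue that the mean is at least $d$; the delicate point there is to control exactly which $d$-subsets actually produce vertices of $\polarset{P}$.
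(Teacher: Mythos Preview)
The statement you attempt is labeled \emph{Conjecture} in the paper and is not proved there; it is presented as an open problem connected to \Href{Theorem}{thm:qst_upper_bound}. So there is no proof in the paper to compare against.

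Your reformulations are correct. The set in question is indeed $\polarset{P}$ for $P=\conv\{u_1,\dots,u_{2d}\}$; the case $0\notin\mathrm{int}\,P$ is trivial; and the remaining assertion is equivalent to the statement that the $2d$ open spherical caps $C_i=\{y\in S^{d-1}:\iprod{u_i}{y}>1/\sqrt{d}\}$ do not cover $S^{d-1}$. You also correctly identify the cross-polytope as the extremal configuration and note that a plain union bound on cap measures is far too weak.

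However, the proposal stops well short of a proof. Neither of the two suggested attacks is carried out, and as stated neither can be completed without a genuinely new idea. For the topological route you do not specify the odd map $f$, nor why its zeros should correspond to points avoided by all caps; more seriously, an ``approximate antipodal pairing'' need not exist at all (take $d+1$ of the $u_i$ clustered near $e_1$ and the remaining $d-1$ clustered near $-e_1$), so any Borsuk--Ulam scheme built on such a pairing breaks down before it starts. For the analytic route you yourself flag the obstacle: only certain $d$-subsets $I$ produce vertices of $\polarset{P}$, and you give no mechanism linking feasibility of $I$ to largeness of $\mathbf{1}^\top(V_IV_I^\top)^{-1}\mathbf{1}$; there is no a~priori reason the feasible $I$'s should be the ones with large $\|x_I\|^2$, so a Cauchy--Binet average over all bases says nothing about the vertices you actually have.

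In short, the reductions are sound and standard, but what remains is precisely the hard content of the conjecture, which neither you nor the paper supplies.
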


\section{The main steps in the proof of \Href{Theorem}{thm:QST_monochromatic}}

 Since $r(1) = 1$, we will assume that 
 $d \geq 2$ throughout the paper. 

First, we reduce the problem for the polytopal case. 
By the classical Carath\'eodory theorem \cite[p.200]{caratheodory1911variabilitatsbereich},
 any point of a convex hull of a subset $Q$ of $\Red$ can be represented as a convex combination of at most $d +1$ points of $Q.$ Thus, taking a sufficiently dense subset of the unit sphere, we observe that for any $\epsilon \in (0,1)$ and any  set $Q \subset \Red$  whose convex hull contains 
 $\ball{d},$ there is a finite subset $Q_f$ of $Q$ whose convex hull contains the ball
 $(1- \epsilon) \ball{d}.$ Hence,  \Href{Theorem}{thm:QST_monochromatic} follows from the following polytopal version.
 
\begin{thm}\label{thm:QST_polytope_monochromatic}
Let $Q$ be a convex polytope  in $\Red$ containing the Euclidean unit ball $\ball{d}.$
Then there are at most $2d$ vertices of $Q$ whose convex hull
$Q^\prime$ satisfies
\[
\frac{1}{5d^2} \ball{d} \subset Q^\prime.
\]  
\end{thm}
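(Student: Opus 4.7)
My plan is to first pass to the polar setting and identify the statement as a linear-programming / covering question. Setting $P = Q^\polar$, the hypothesis $\ball{d} \subset Q$ is equivalent to $P \subset \ball{d}$ with $0$ in the interior of $P$; each vertex $v$ of $Q$ (necessarily of norm at least $1$) corresponds to a facet of $P$ cut out by $\{x \st \iprod{v}{x} = 1\}$. Selecting $2d$ vertices $v_1,\dots,v_{2d}$ of $Q$ with $\conv\{v_i\} \supset \tfrac{1}{5d^2}\ball{d}$ is polar-equivalent to selecting $2d$ of the defining halfspaces of $P$ whose intersection is contained in $5d^2\,\ball{d}$. By LP duality this means that every unit vector $u$ should admit a representation $u = \sum c_i v_i$ with $c_i \geq 0$ and $\sum c_i \leq 5d^2$. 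The crucial ``richness'' available---because $\ball{d} \subset Q$---is that for every $u \in S^{d-1}$ some vertex $v$ of $Q$ has $\iprod{v}{u} \geq 1$, since $u \in \ball{d} \subset Q$ is itself a convex combination of vertices.

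I would then construct the $2d$ vertices by a John / Dvoretzky--Rogers-style selection. First, fix an orthonormal frame $e_1,\dots,e_d$ adapted to a distinguished position of $Q$ (the axes of the John ellipsoid of $Q$ or of the L\"owner ellipsoid of $Q^\polar$, say). Then, for $i = 1,\dots,d$, iteratively pick $v_i^+$ to be a vertex of $Q$ that maximizes $\iprod{v}{e_i}$ subject to its components along $e_1,\dots,e_{i-1}$ being controlled; the richness observation ensures $\iprod{v_i^+}{e_i} \geq 1$ at each step. Pick $v_i^-$ analogously for $-e_i$, and set $V = \braces{v_i^\pm \st 1 \leq i \leq d}$, which gives $|V| \leq 2d$.

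Finally, to verify $\tfrac{1}{5d^2}\ball{d} \subset \conv V$, I would estimate the support function $h_{\conv V}(u) = \max_{v \in V} \iprod{v}{u}$ for an arbitrary unit $u = \sum \alpha_i e_i$. The ``diagonal'' contribution $\alpha_i \iprod{v_i^{\pm}}{e_i}$ is at least $|\alpha_i|$, and some $|\alpha_i| \geq 1/\sqrt{d}$, but ``off-diagonal'' contributions $\iprod{v_i^{\pm}}{e_j}$ for $j \neq i$ may work against us. The main obstacle---and the source of the $d^2$ factor---is precisely the control of these off-diagonals. One factor of $d$ enters through the John-ellipsoid distortion of $Q$, which bounds the size of $v_i^{\pm}$ (and hence of the off-diagonal inner products) relative to the diagonal entries; a second factor of $d$ enters because the worst-case direction $u$ may be adversely aligned against a majority of the selected vertices, forcing one to take a maximum rather than a sum over $i$. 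Balancing these two losses carefully against the iterative selection should yield the stated bound $1/(5d^2)$; the delicate part will be choosing the right ``controlled'' frame so that the off-diagonal errors accumulate additively rather than multiplicatively.
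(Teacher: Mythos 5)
Your opening reformulation via polarity is correct, and the observation that for every unit $u$ some vertex $v$ of $Q$ has $\iprod{v}{u} \geq 1$ is a valid consequence of $\ball{d} \subset Q$. But the rest is not a proof; it is a sketch that leaves the genuinely difficult part undone.

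Two gaps are fatal as written. First, the selection rule is not defined: ``pick $v_i^+$ to maximize $\iprod{v}{e_i}$ subject to its components along $e_1,\dots,e_{i-1}$ being controlled'' specifies neither what ``controlled'' means nor why such a vertex exists. Second, and more fundamentally, the off-diagonal inner products $\iprod{v_i^\pm}{e_j}$ cannot be bounded by passing to a John or L\"owner position. The vertices of $Q$ can be arbitrarily far from the origin in the Euclidean metric (take $Q$ to be a very long, thin simplex containing $\ball{d}$), and both the hypothesis $\ball{d} \subset Q$ and the conclusion $\frac{1}{5d^2}\ball{d} \subset Q'$ are tied to the standard Euclidean ball. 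The John-ellipsoid sandwich $E \subset Q \subset dE$ is an affine statement; to convert it into a bound on $|v_i^\pm|$ you would have to pay for the eccentricity of $E$ relative to $\ball{d}$, which is not controlled by the hypotheses. Your final paragraph correctly identifies that $h_{\conv V}(u)$ could be swamped by large negative off-diagonal terms, but then waves at the fix (``balancing these two losses carefully\ldots should yield the stated bound''); that balancing is precisely the content of the theorem and is not supplied.

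The paper circumvents the boundedness problem by working in $K = Q^\polar$, which automatically satisfies $K \subset \ball{d}$. It shifts to a ``deep'' center $c$ with $K-c \subset -d(K-c)$ (the centroid works, by \Href{Proposition}{prp:d_centers}), takes a second polar $L = (K-c)^\polar$, which satisfies $L \subset -dL$ and $L \supset \frac{1}{2}\ball{d}$, and applies the sparse-approximation lemma \Href{Proposition}{prp:ambrus_sparsification} (built on a maximal-volume simplex inside $L$) to extract at most $2d$ vertices $w_i$ of $L$ with $L \subset -(d+2)d\,\conv\{w_i\}$. A single polar step and the translation back by $c$ then give $\bigcap H_{v_i} \subset (2(d+2)d+1)\ball{d} \subset 5d^2\ball{d}$. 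The double-polarity-with-a-good-center manoeuvre is the key idea your proposal lacks; without it, the greedy frame-adapted selection has no mechanism to tame the vertices of an unbounded or highly eccentric $Q$.
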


 \Href{Proposition}{prp:qst_from_BKP} was used in \cite{barany1982quantitative} to prove  certain quantitative versions of the Helly theorem. The connection between the quantitative Steinitz result and the quantitative Helly-type result is via polar duality. Recently, the authors of this paper \cite{ivanov2022quantitative} have proposed a new approach to quantitative Helly-type results via sparse approximation of polytopes. The connection between the sparse approximation of polytopes and the quantitative Helly-type result is via polar duality again. 
We state a refined version of the result on the sparse approximation of polytopes obtained by Almendra--Hern\'andez, Ambrus, and Kendall in \cite[Theorem 1]{almendra2022quantitative}.
\begin{prp}[Almendra--Hern\'andez et. al.]\label{prp:ambrus_sparsification}
Let $\lambda  > 0 ,$ and $L \subset \Red$ be a convex polytope such that 
$L \subset - \lambda L.$ Then there exist at most $2d$ vertices of $L$ whose convex hull
$L^\prime$ satisfies
\[
L \subset -(\lambda +2)d \cdot L^\prime.
\]
\end{prp}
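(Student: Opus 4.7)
The plan is to combine the classical Steinitz theorem with a quantitative estimate that exploits both the reflection hypothesis $L \subset -\lambda L$ and John's theorem. As a preliminary remark, $0 \in \mathrm{int}(L)$: for every vertex $v$ of $L$, the point $-v/\lambda$ lies in $L$ (by hypothesis), so the segment from $v$ to $-v/\lambda$ lies in $L$ and passes through the origin. Since $L$ may be assumed to have nonempty interior (otherwise one works in its affine hull), $0$ lies in the interior of $L$.

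Next, by an affine transformation I would put $L$ in John position, so that the maximum-volume inscribed ellipsoid of $L$ is the unit ball $\ball{d}$ centered at some point $c$. John's theorem then gives $c + \ball{d} \subset L \subset c + d\,\ball{d}$, and the reflection hypothesis $L \subset -\lambda L$ constrains $c$ (forcing it to be quantitatively close to the origin). I would then apply the classical Steinitz theorem to the vertex set of $L$ to select at most $2d$ vertices whose convex hull $L^\prime$ contains $0$ in its interior. The key point is that these $2d$ vertices must be selected carefully, not arbitrarily: a natural choice is to pick them from among the contact points of $L$ with its outer L\"owner ellipsoid, so that they come with a John decomposition of the identity, or to pick them so as to maximize the inradius of $L^\prime$.

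To conclude, the inclusion $L \subset -(\lambda+2)d\,L^\prime$ would be assembled from two ingredients: (i) a John-type estimate showing that $L^\prime$ contains a ball of radius at least a $1/d$-fraction of the outer size of $L$, and (ii) the bound $-L \subset \lambda L$ (which follows directly from the reflection hypothesis), producing the factor $\lambda+2$ after tracking the additive $2$ that comes from the centering correction $c$.

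The main obstacle is the vertex-selection step: a John-type decomposition of the identity supported on the vertices of $L$ typically involves up to $\binom{d+1}{2}+1$ points, and trimming this down to $2d$ vertices while retaining the quantitative containment is the central technical difficulty. I expect the proof to rely on a careful greedy or combinatorial argument (for instance a colored variant of Carath\'eodory's theorem, or an iterative deletion argument maintaining $0$ in the interior of the current convex hull) to achieve this reduction without losing more than a factor of $d$.
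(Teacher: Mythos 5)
Your sketch stops short of a proof: you yourself flag the vertex-selection step as ``the central technical difficulty'' and leave it unresolved, proposing only that ``a careful greedy or combinatorial argument'' should exist. That step is precisely where the content of the proposition lies, so what you have is a plan with the key idea missing. Moreover, the John/L\"owner route you propose runs into a real obstruction: a John decomposition of the identity for a polytope can be supported on as many as $\tfrac{d(d+1)}{2}+1$ contact points, and there is no general mechanism for thinning such a decomposition down to $2d$ points while retaining a containment with only polynomial loss. The classical Steinitz theorem guarantees $2d$ vertices whose hull contains $0$ in the interior, but gives no quantitative control whatsoever, so invoking it does not supply the needed inradius bound either.

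The paper's proof sidesteps all of this by replacing the John ellipsoid with a different extremal object: a simplex $S=\conv\{0,v_1,\dots,v_d\}$ of maximal volume among simplices with $d$ vertices in $\vertexset L$ and one vertex at the origin. Maximality forces every vertex of $L$ into the parallelepiped $P=\sum_{i}[-v_i,v_i]$, hence $L\subset P$, and an explicit computation gives $P\subset -2dS+(v_1+\cdots+v_d)$. The remaining $d$ (at most) vertices are chosen by Carath\'eodory applied to the boundary point of $L$ in the direction $-(v_1+\cdots+v_d)$; together with $v_1,\dots,v_d$ these form $L'$. The hypothesis $L\subset-\lambda L$ is used exactly once, to place $\tfrac{1}{d}(v_1+\cdots+v_d)$ in $-\lambda L'$, and chaining the inclusions gives $L\subset-(\lambda+2)d\,L'$. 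This yields the factor $(\lambda+2)d$ with no additive centering correction and no John decomposition at all. I would suggest abandoning the contact-point strategy here: the maximal-volume simplex gives you $d$ well-spread vertices ``for free,'' and Carath\'eodory supplies the other $d$, which is exactly the $2d$ budget.
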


Choosing the origin smartly, one can achieve $\lambda  = d.$ 
For instance, the following statement holds. 

\begin{prp}
\label{prp:d_centers}
Let $K$ be a  convex body in $\Red$.
Then the inclusion $(K-c) \subset 
-d (K-c)$ holds for some point $c$ in the interior of $K$,
for example, if $c$ is the centroid of $K$ or of a maximal volume simplex within $K$.
\end{prp}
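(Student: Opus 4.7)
The plan is to reformulate the desired inclusion $(K-c) \subset -d(K-c)$ as $T(K) \subset K$, where $T$ is the affine contraction $T(x) = c - (x-c)/d$, and then verify the property separately for each of the two candidate centers.

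For $c$ the centroid of a maximum-volume inscribed simplex $\Delta = \conv(v_0, \dots, v_d) \subseteq K$, I would in fact establish the stronger inclusion $T(K) \subset \Delta$. Write $\Delta$ via its facet inequalities $\langle n_i, x\rangle \leq a_i$, with $\langle n_i, v_j\rangle = a_i$ for $j \neq i$; the identity $c = \frac{1}{d+1}\sum_j v_j$ then gives $(d+1)\langle n_i, c\rangle = \langle n_i, v_i\rangle + d a_i$. Suppose toward contradiction that some $p \in K$ has $T(p) \notin \Delta$, violating the $i$-th facet inequality. Substituting the definition of $T(p)$ and using the preceding identity, the inequality $\langle n_i, T(p)\rangle > a_i$ simplifies to $\langle n_i, p\rangle < \langle n_i, v_i\rangle$. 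Geometrically, $p$ lies strictly farther than $v_i$ from the hyperplane carrying the facet opposite $v_i$, on the same side of it. Since the volume of a $d$-simplex equals $\tfrac{1}{d}$ (base area)(height from the opposite vertex), replacing $v_i$ with $p$ yields an inscribed simplex in $K$ of strictly greater volume, contradicting the maximality of $\Delta$.

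For $c$ the centroid of $K$, the claim is a classical consequence of Brunn's concavity theorem. Translate so that $c = 0$ and fix a direction $u$; set $A(s) = \mathrm{vol}_{d-1}(K \cap \{\langle u, x\rangle = s\})$ on $[s_-, s_+]$. Brunn's theorem gives $A^{1/(d-1)}$ concave on this interval, while the centroid condition reads $\int_{s_-}^{s_+} s A(s)\, ds = 0$. The extremal case is the cone with apex at height $s_+$ and $A^{1/(d-1)}$ vanishing linearly there: a direct integration puts its centroid at $(s_+ + d s_-)/(d+1)$, so such a cone is centered precisely when $s_+ = -d s_-$. A standard Brunn-Minkowski comparison (e.g.\ via Schwarz symmetrization to a body of revolution with concave radial profile) shows that for any $K$ with concave profile, the centroid lies between the centroids of the two extremal cones (apex at $s_+$ and apex at $s_-$); demanding that it be at the origin therefore forces $s_+ \leq -d s_-$ and symmetrically $-s_- \leq d s_+$, which is exactly the bound $h_{K-c}(u) \leq d\, h_{K-c}(-u)$ on the support function that we need.

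The genuinely new content is the simplex case, whose proof is short and driven by the single substitution identifying $(d+1)\langle n_i, c\rangle - d a_i$ with $\langle n_i, v_i\rangle$. The main technical obstacle, should the centroid case be written out in detail, is the Brunn-Minkowski cone comparison; this is standard Gr\"unbaum-type centroid material and can be cited rather than reproved.
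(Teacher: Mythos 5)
The paper states this proposition without proof---it is treated as classical, and the only thing the main argument needs from it is the bare existence of such a point $c$. So there is no paper proof to compare against; what follows is an assessment of your argument on its own terms.

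Your proof of the simplex-centroid case is correct and complete, and it establishes the stronger inclusion $c - \frac{1}{d}(K-c) \subset \Delta$. The central identity $(d+1)\langle n_i, c\rangle = \langle n_i, v_i\rangle + d a_i$ is verified by direct substitution, and the chain $\langle n_i, T(p)\rangle > a_i \Leftrightarrow \langle n_i, p\rangle < \langle n_i, v_i\rangle$ is correct. Since both $p$ and $v_i$ lie on the side $\langle n_i, \cdot\rangle < a_i$ of the facet hyperplane and $p$ is strictly farther, replacing $v_i$ by $p$ raises the height over the fixed base facet and hence strictly increases the volume, contradicting maximality of $\Delta$ (which exists by compactness of $K^{d+1}$ and continuity of the volume form). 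One tiny point worth making explicit: a maximum-volume simplex in a convex body is automatically full-dimensional, so the facet normals $n_i$ are well defined.

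Your centroid case correctly reduces, via support functions and Brunn's concavity, to a one-dimensional statement about the section-volume profile $A$, and the cone computation giving centroid $(s_+ + d s_-)/(d+1)$ is right. The phrase ``the centroid lies between the centroids of the two extremal cones'' is where the real content hides, and as stated it is a heuristic rather than a cited inequality; the precise statement you need is that for a nonnegative concave profile $g = A^{1/(d-1)}$ on $[s_-,s_+]$ the centroid of $g^{d-1}\,ds$ is at least $(s_+ + d s_-)/(d+1)$, which is exactly the classical Minkowski--Radon theorem (see Bonnesen--Fenchel or Gr\"unbaum's survey on measures of symmetry). Since you explicitly defer to a citation here, this is fine, but in a final write-up you should name that theorem rather than gesture at ``a standard Brunn--Minkowski comparison,'' and note that only the one-sided inequality $s_+ \le -d\,s_-$ is needed per direction---the other follows by replacing $u$ with $-u$.

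In short: the proposal is correct. The simplex argument is a genuinely self-contained proof and is the more useful contribution here; the centroid argument is an accurate sketch of a result that should simply be cited.
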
  

We recall that the \emph{polar} of the set $S \subset \Red$ is defined by
\[
\polarset{S} = \braces{x \in \Red \st \iprod{x}{s} \leq 1 \quad \text{for all} \quad s \in S}.
\]

Our idea of the proof of \Href{Theorem}{thm:QST_polytope_monochromatic} is to use duality twice:
We will start with translating  the assertion of the theorem in terms of the polar polytope $\polarset{Q}$ of $Q$. Then we will choose a point $c$  ``deep'' in $\polarset{Q}$ and consider 
$\polarset{\parenth{\polarset{Q} - c}}.$ Roughly speaking, by changing the center of polarity, we obtain a more well-structured convex polytope. Next, we use \Href{Proposition}{prp:ambrus_sparsification} to obtain a sufficiently reasonable bound on $r(d),$
which is not destroyed on the way back to $\polarset{Q}$ and then to $Q.$  

We use $[n]$ 
to denote the sets $\{1, \dots, n\}.$
The convex hull of a set $S$ is denoted by $\conv \ \! S$.
For a non-zero vector $v \in \Red,$  $H_{v}$  denotes the half-space
\[
H_v = \braces{x \in \Red \st \iprod{x}{v} \leq 1}.
\]
We use $\vertexset P$ to denote the vertex set of a polytope $P.$

For the sake of completeness, we provide a shortened original proof of 
\Href{Proposition}{prp:ambrus_sparsification}.
\begin{proof}[Proof of \Href{Proposition}{prp:ambrus_sparsification}]
   The condition $L \subseteq -\lambda L$ ensures that  the origin belongs to the interior of $L$.     Among all simplices with $d$ vertices from the set of vertices of $L$ and one vertex at the origin, consider a simplex $S = \conv\{0,v_1,\ldots,v_d\}$ with maximal volume.   The simplex $S$ can be represented as
    \begin{equation}\label{eq:conv_S}
        S = \braces{ x \in \Red \st x = \alpha_1 v_1 + \ldots + \alpha_d v_d \quad  \textrm{ for }\  \alpha_i \geq 0 \textrm{ and } \sum_{i=1}^d \alpha_i \leq 1 }.
    \end{equation}
    
     Define $P = \sum\limits_{i \in [d]} [-v_i, v_i].$ It is easy to see that
     $P$ is a paralletope that can be represented as
    \begin{equation}\label{eq:conv_P}
        P = \{ x \in \Red \st x = \beta_1 v_1 + \ldots + \beta_d v_d \quad \textrm{ for } \beta_i \in [-1,1] \}.
    \end{equation}
Since $S$ is chosen maximally, equation \eqref{eq:conv_P} shows that for any vertex $v$ of $L$, $v \in P$. By convexity,
    \begin{equation}
        \label{eq:Q_subset_P_ambrus}
        L \subset P.
    \end{equation}
    Let $S^\prime = -2dS + (v_1 + \ldots + v_d)$. By \eqref{eq:conv_S},
    \begin{equation*}
        S^\prime = \braces{x \in \Red \st x = \gamma_1v_1 + \ldots + \gamma_dv_d \quad  \text{ for } \gamma_i \leq 1 \textrm{ and } \sum_{i= 1}^{d} \gamma_i \geq -d },
    \end{equation*}
    which, together with \eqref{eq:conv_P}, yields
    \begin{equation}
        \label{eq:2}
        P \subseteq S^\prime.
    \end{equation}
    Let $y$ be the intersection of the ray emanating from $0$ in the direction $-(v_1 + \dots + v_d)$     and the boundary of $L$.
    By Carath\'eodory's theorem, we can choose $k \leq d$ vertices $\{v_1',\ldots,v_k'\}$ of $L$ such that
    $y \in \conv\{v_1',\ldots,v_k'\}$.
    Set $L^\prime= \conv \{v_1,\ldots,v_d,v_1',\ldots,v_k'\}$.
Clearly, $\frac{v_1 + \dots + v_d}{d} \in S \subset L.$ 
Thus, $0 \in L^\prime,$ and consequently,
    \begin{equation}
        \label{eq:3}
        S \subseteq L^\prime.
    \end{equation}
 Since $L \subset - \lambda L,$ we also have that 
    \begin{equation*}
       \frac{v_1 + \dots + v_d}{d} \in -\lambda [y, 0] \subset - \lambda  L^\prime.
    \end{equation*}
    Combining it with \eqref{eq:Q_subset_P_ambrus}, \eqref{eq:2}, \eqref{eq:3}, we obtain
    \begin{equation} \label{eq:6}
        L \subset P \subset S^\prime
        = -2d S + (v_1 + \dots + v_d)
        \subset -2d  \, L^\prime - \lambda d \, L^\prime
        = -(\lambda + 2) d \, L^\prime,
    \end{equation}
    Completing the proof of \Href{Proposition}{prp:ambrus_sparsification}.
\end{proof}

\section{Proof of \Href{Theorem}{thm:QST_monochromatic}}
As was explained in the previous section, it suffices to prove 
\Href{Theorem}{thm:QST_polytope_monochromatic}, which we proceed to work with.
  
Set $K = \polarset{Q}.$ Since $Q \supset \ball{d},$ $K \subset \ball{d}.$ 
Also, it is easy to see that $K$ is a  convex polytope of the form
\begin{equation}
\label{eq:set_via_hyperplanes}
K = \bigcap\limits_{v \in \vertexset{Q} } H_v,
\end{equation}
containing the origin in its interior. 
By duality, it suffices to show that there are at most $2d$ half-spaces $H_v$ with $v \in \vertexset{Q},$ whose intersection is contained in the ball $5d^2 \ball{d}.$

Let $c$ be a point in the interior of $K$ such that the inclusion 
\[
K-c \subset -d (K-c)
\]
holds.
The existence of $c$ follows from \Href{Proposition}{prp:d_centers}.
Set $L = \polarset{\parenth{K-c}}.$  Clearly,
\begin{equation*}
L \subset -d L.
\end{equation*}
Now, we use \Href{Proposition}{prp:ambrus_sparsification} with $\lambda =d.$ 
We obtain that there are $w_1, \dots, w_m \in \vertexset {L}$ for some integer 
$m$ satisfying $m \leq 2d$ such that
\[
L \subset -(d+2)d \cdot \conv \braces{w_i \st i \in [m]}.
\]
Since $c \in K  \subset \ball{d},$ one has that 
$K-c \subset 2 \ball{d}.$ Consequently, 
$L \supset \frac{1}{2}\ball{d}.$
So,
\[
\frac{1}{2}\ball{d} \subset L \subset -(d+2)d \cdot \conv \braces{w_i \st i \in [m]}.
\]
Considering the polar sets, 
we get
\[
\polarset{\parenth{\conv \braces{w_i \st i \in [m]}}} \subset 2(d+2)d \ball{d}.
\]
Recall that $c$ is an interior point of the polytope $K.$  By \eqref{eq:set_via_hyperplanes}, one has that for any 
$w \in \vertexset{L},$ $H_w = H_v - c$  for some 
 $v \in \vertexset{Q}.$ It means that 
\[
\polarset{\parenth{\conv \braces{w_i \st i \in [m]}}} = \bigcap\limits_{v_i \in [m] } \parenth{H_{v_i} - c}
\]
for corresponding $v_i \in \vertexset{Q}.$ 
Thus,
\[
\bigcap\limits_{v_i \in [m] } H_{v_i}  = \bigcap\limits_{v_i \in [m] } \parenth{H_{v_i} - c} + c
\subset 2(d+2)d \ball{d} +c \subset \parenth{2(d +2)d +1} \ball{d}.
\]
Since $d \geq 2,$ the desired bound for $Q^\prime = \conv \braces{v_i \st i \in [m]}$ follows. 
The proof of \Href{Theorem}{thm:QST_polytope_monochromatic} is complete, which implies 
\Href{Theorem}{thm:QST_monochromatic} as was discussed earlier.

\section{Proof of \Href{Theorem}{thm:qst_upper_bound}}

In this section, we prove Theorem~\ref{thm:qst_upper_bound}, which is a 
dual version of  \cite[Theorem 1.4]{ivanov2022quantitative} and immediately follows from it. For the sake of completeness, we prove Theorem~\ref{thm:qst_upper_bound} here. 
We first state the main ingredient of the proof obtained by K. Ball and M. Prodromou.
\begin{prp}[\cite{Ball2009}, Theorem 1.4]
\label{prp:trace_bound_tight_frame}
Let  vectors  $\{v_1,  \dots, v_n\}\subset \Red$ satisfy $\sum\limits_1^{n} v_i \otimes v_i = \id$.
Then for any positive semi-definite operator $T \colon \Red \to \Red,$
there is a point $p$ in the intersection of the strips 
$\{x \in \Red \st \abs{\iprod{x}{v_i}} \leq 1\}$ satisfying 
$\iprod{p}{T p} \geq \tr  {T}$.
\end{prp}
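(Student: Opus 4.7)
The plan is to pass to polarity and then rescale to a Parseval tight frame. Write $C = \conv\{\pm u_1,\ldots,\pm u_n\}$ for the absolute convex hull. Since
\[
\polarset{C} = \bigcap_{i=1}^n \{x \in \Red \st |\iprod{x}{u_i}| \leq 1\},
\]
the inclusion $C \supset r\ball{d}$ is equivalent to $\polarset{C} \subset \frac{1}{r}\ball{d}$. Hence, to rule out $C \supset (\sqrt{n}/d + \varepsilon)\ball{d}$ for every $\varepsilon > 0$, it suffices to exhibit a single point $p \in \polarset{C}$ with $\|p\| \geq d/\sqrt n$.

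To deploy \Href{Proposition}{prp:trace_bound_tight_frame}, the defining vectors must satisfy the Parseval condition, so the $u_i$ have to be rescaled. Let $\Phi = \sum_{i=1}^n u_i \otimes u_i$. If $\Phi$ is singular, the $u_i$ span a proper subspace of $\Red$, $C$ has empty interior, and the claim is vacuous; so assume $\Phi \succ 0$ and set $v_i = \Phi^{-1/2} u_i$. A direct computation gives
\[
\sum_{i=1}^n v_i \otimes v_i = \Phi^{-1/2}\,\Phi\,\Phi^{-1/2} = \id,
\]
so $\{v_i\}$ is a tight frame.

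Now I apply \Href{Proposition}{prp:trace_bound_tight_frame} to $\{v_i\}$ with the positive semi-definite operator $\Phi^{-1}$ in place of $T$: it yields a point $p'$ with $|\iprod{p'}{v_i}| \leq 1$ for every $i$ and $\iprod{p'}{\Phi^{-1} p'} \geq \tr \Phi^{-1}$. Setting $p = \Phi^{-1/2} p'$, the self-adjointness of $\Phi^{-1/2}$ gives $\iprod{p}{u_i} = \iprod{\Phi^{-1/2}p'}{u_i} = \iprod{p'}{v_i}$, so $p \in \polarset{C}$, while
\[
\|p\|^2 = \iprod{\Phi^{-1/2} p'}{\Phi^{-1/2} p'} = \iprod{p'}{\Phi^{-1} p'} \geq \tr \Phi^{-1}.
\]
Since $\|u_i\| = 1$, we have $\tr \Phi = n$, and the AM--HM inequality applied to the eigenvalues of $\Phi$ yields $\tr \Phi^{-1} \geq d^2/n$. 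Therefore $\|p\| \geq d/\sqrt n$, as required. The main conceptual point, and the only non-routine step, is the choice of PSD operator in \Href{Proposition}{prp:trace_bound_tight_frame}: the rescaling $v_i = \Phi^{-1/2} u_i$ transports the strip geometry of the $u_i$ through $\Phi^{-1/2}$, and feeding $\Phi^{-1}$ into the proposition converts the quadratic lower bound on $p'$ in the $v_i$-strips into precisely the Euclidean lower bound on $p$ in the $u_i$-strips that the theorem requires.
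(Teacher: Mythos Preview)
Your write-up does not prove the stated proposition; it \emph{assumes} it. The line ``I apply \Href{Proposition}{prp:trace_bound_tight_frame} to $\{v_i\}$ with the positive semi-definite operator $\Phi^{-1}$'' invokes exactly the claim in question, so as a proof of \Href{Proposition}{prp:trace_bound_tight_frame} the argument is circular.

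What you have actually written is a derivation of \Href{Theorem}{thm:qst_upper_bound} from \Href{Proposition}{prp:trace_bound_tight_frame}. That derivation is correct and is essentially identical to the paper's own: the paper also does not prove the proposition (it cites it from Ball and Prodromou), sets $A=\sum_i u_i\otimes u_i$ (your $\Phi$), applies the proposition with $v_i=A^{-1/2}u_i$ and $T=A^{-1}$, pulls the resulting point back through $A^{-1/2}$ to land in $\polarset{C}$, and finishes with the eigenvalue inequality $\tr A^{-1}\ge d^2/\tr A$. The only cosmetic difference is that the paper names this last step ``Cauchy--Schwarz'' where you say ``AM--HM''; both yield the same bound on the eigenvalues.
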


\begin{proof}[Proof of Theorem~\ref{thm:qst_upper_bound}]
There is nothing to prove if the absolute convex hull 
$\conv \braces{ \pm u_i \st i \in [n]}$ does not contain the origin in its interior. 
So, assume that   $\conv \braces{ \pm u_i \st i \in [n]}$ contains the origin in its interior.
Set  $K = \polarset{\parenth{\conv \braces{\pm u_i \st i \in [n] }}}.$ By duality,
it suffices to show that $K$ contains a point of Euclidean norm 
$\frac{d}{\sqrt{n}}.$ 

Clearly,  $\{u_i \st i \in [n] \}$ spans $\Red.$ 
Consider  $A = \sum\limits_{i \in [n]} u_i \otimes u_i$.
Since the vectors span the space, $A$ is positive definite.  
Using  \Href{Proposition}{prp:trace_bound_tight_frame} with  
$ v_i = A^{-1/2} u_i,  i \in [n],$  
 and  $T  = A^{-1},$ we find a point $p$ in 
 \[
\bigcap\limits_{i \in [n]} \{x \st \abs{\iprod{v_i}{x}} \leq 1\}.
\] such that
\[
\iprod{p}{A^{-1} p} \geq \tr{A^{-1}}.
\]

Denote $q = A^{-1/2} p$.
Then, by the choice of $p,$ 
\[
1 \geq \abs{\iprod{p}{A^{-1/2} u_i}} = 
\abs{\iprod{A^{-1/2}  p}{u_i}} = 
\abs{\iprod{q}{u_i}}. 
\] 
That is, $q \in K$.
On the other hand, 
\[
\abs{q}^2 = \iprod{A^{-1/2} p}{A^{-1/2} p} = 
\iprod{p}{A^{-1} p } \geq \tr{A^{-1}}.
\]
Finally, since  $\tr A = n $ and by the Cauchy--Schwarz inequality,  
one sees that  
  $\tr A^{-1}$ is at least $\frac{d^2}{n}$.
Thus, $\abs{q}  \geq \frac{d}{\sqrt{n}}$.
This completes the proof of Theorem~\ref{thm:qst_upper_bound}.
\end{proof}

\bibliographystyle{amsalpha}
\bibliography{../../work_current/uvolit}
\end{document}